\documentclass[11pt,a4paper]{amsart}

 \usepackage{todonotes}
 \usepackage{csquotes}

\newenvironment{proof:mainthm1}{\vspace{0.2cm}\paragraph{\bf\textit{Proof  of Theorem \ref{thm:main}:}}}{\hfill$\blacksquare$ }
\newenvironment{proof:corollary}{\vspace{0.2cm}\paragraph{\bf\textit{Proof  of Corollary \ref{cor:corollary}}}}{\hfill$\blacksquare$ }

\usepackage{todonotes}

\usepackage{amsfonts,amssymb,enumerate,bm,hhline,makecell,array,mathtools}
\usepackage{mathrsfs}
\usepackage{comment}
\usepackage[normalem]{ulem}
\usepackage[colorlinks=true,citecolor=blue!70!black, urlcolor=blue!70!black, pagebackref, linkcolor=blue!70!black]{hyperref}
\usepackage{tikz-cd}
\usepackage{amscd}
\usepackage[shortlabels]{enumitem}
\setlist[itemize]{noitemsep,nolistsep}
\usepackage{tensor}
\usepackage{tikz}
\usepackage{setspace}
\usetikzlibrary{matrix,arrows}
\usepackage{extarrows}
\usepackage[toc,page]{appendix}
\usepackage[all,ps,cmtip,rotate]{xy} 
\usepackage{color}
\usepackage{hyperref}
\hypersetup{
    colorlinks,
    citecolor = green!80!black,
    linkcolor=red,
    urlcolor=magenta
}
\usepackage{lmodern}
\usepackage[all]{xy}
\usepackage{comment}

\usepackage{enumitem}
\setlist[itemize]{noitemsep,nolistsep}
\setlist[enumerate]{noitemsep,nolistsep}
\usepackage{colortbl} 

\usepackage{geometry}
\allowdisplaybreaks

\def\Z{{\bf Z}}

\def\C{{\bf C}}

\def\R{{\bf R}}
\def\Q{{\bf Q}}
\def\P{{\bf P}}

\def\hk{hyper-K\"ahler}

\def\phi{\varphi}

\def\cA{\mathcal{A}}

\def\cD{\mathcal{D}}

\def\cG{\mathcal{G}}

\def\cP{\mathcal{P}}

\def\cS{\mathcal{S}}

\def\cX{\mathscr{X}}
\def\cY{\mathcal{Y}}
\def\cZ{\mathcal{Z}}

\def\llra{\hbox to 10mm{\rightarrowfill}}
\def\lllra{\hbox to 15mm{\rightarrowfill}}

\def\llla{\hbox to 10mm{\leftarrowfill}}
\def\lllla{\hbox to 15mm{\leftarrowfill}}

\DeclareMathOperator{\Bir}{Bir}

\DeclareMathOperator{\Exc}{Exc}

\DeclareMathOperator{\Hdg}{Hdg}

\DeclareMathOperator{\Mon}{Mon}
\DeclareMathOperator{\Mov}{Mov}

\DeclareMathOperator{\Pic}{Pic}
\DeclareMathOperator{\Pos}{Pos}

\DeclareMathOperator{\rank}{rank}

\DeclareMathOperator{\tO}{\mathrm{O}}

\def\llra{\hbox to 10mm{\rightarrowfill}}
\def\lllra{\hbox to 15mm{\rightarrowfill}}

\def\subset{\subseteq}

\newtheorem{lem}{Lemma}[section]

\newtheorem{thm}[lem]{Theorem}
\newtheorem{cor}[lem]{Corollary}
\newtheorem{prop}[lem]{Proposition}

\theoremstyle{definition}
\newtheorem{defn}[lem]{Definition}
\newtheorem{rem}[lem]{Remark}

\newtheorem{ex}[lem]{Example}

\theoremstyle{remark}
\newtheorem*{remark*}{Remark}
\newtheorem*{note*}{Note}

\def\Lkkk[#1]{{\Lambda_{\KKK^{[#1]}}}}
\def\kkk[#1]{{\KKK^{[#1]}}}
\DeclareMathOperator{\KKK}{{K3}}

\def\sss[#1]{{S^{[#1]}}}

\def\setminus{\smallsetminus}

\definecolor{orange}{rgb}{1,0.55,0}

\definecolor{pink}{rgb}{1,0.65,0.79}

\begin{document}

\title[Positive algebraic entropy]{Algebraic entropy of birational automorphism groups in families of hyper-Kähler manifolds} 

\address{}
\email{}
    \author[F.\ A.\ Denisi]{Francesco Antonio Denisi}
    \address{F.\ A.\ Denisi, Fachrichtung Mathematik, Campus, Gebäude E2 4, Universität des Saarlandes, 66123 Saarbrücken, Deutschland}
    \email{denisi@math.uni-sb.de}

    \author[K.\ Oguiso]{Keiji Oguiso}
     \address{K.\ Oguiso, Mathematical Sciences, the University of Tokyo, Meguro Komaba 3-8-1, 
     Tokyo, Japan, and National Center for Theoretical Sciences, 
     Mathematics Division, National Taiwan University, 
     Taipei, Taiwan}
    \email{oguiso@ms.u-tokyo.ac.jp}
    
    \author[C.\ Onorati]{Claudio Onorati}
    \address{C.\ Onorati, Università della Calabria, Department of Mathematics and Computer Science, via Pietro Bucci, 87036 Arcavacata di Rende (CS), Italy} 
    \email{claudio.onorati@unical.it}

    \author[F.\ Rizzo]{Francesca Rizzo}
    \address{F.\ Rizzo, IRMA, Université de Strasbourg 7, rue René Descartes, 67000 Strasbourg}
    \email{francesca.rizzo@unistra.fr}

    \author[S.\ Viktorova]{Sasha Viktorova}
    \address{S.\ Viktorova, National Center for Theoretical Sciences, Mathematics Division, National Taiwan University, Taipei, Taiwan}
    \email{sasha.viktorova@ncts.ntu.edu.tw}

\subjclass[2020]{Primary 14J42; Secondary 14E07, 14J27, 37F80}

\begin{abstract}
In this short note, we study the behavior of the algebraic entropy of birational automorphism groups in families of projective hyper-Kähler manifolds.
\end{abstract} 

\maketitle

\section{Introduction}\label{sect0}

In the following note, we continue the study of the behavior of birational automorphism groups in a family of hyper-Kähler manifolds started in \cite{DORV26}, from the viewpoint of algebraic dynamics..

Let $X$ be a hyper-Kähler manifold and $g$ a birational automorphism of $X$. We define \textit{algebraic entropy} at $H^2$-level as
\[
    h(g):=\log\lambda_1(g),
\]
where $$\lambda_1(g)=\limsup_{n\to\infty} \Big\| (g^n)^*|_{H^{1,1}(X,\C)}\Big\|^{\frac{1}{n}}$$ is the first dynamical degree of $g$. We refer to \cite{DinhSibony05} for generalities of dynamical degrees of rational maps. Notice that a birational automorphism of a hyper-K\"ahler manifold is always an isomorphism in codimension 1, so that its action on $H^2(X,\Z)$ behaves like that of a regular automorphism. The first dynamical degree coincides then with the maximum of the absolute values of the eigenvalues of the action of $g$ on $H^{1,1}(X,\C)$. We refer to \cite{Oguiso07,Oguiso09,LoBianco} for some general properties of algebraic entropy of hyper-K\"ahler manifolds. 
If $g$ has finite order, then clearly $h(g)=0$; on the other hand, there are examples of birational automorphisms $g$ of infinite order with $h(g)=0$ (e.g.\ \cite{Oguiso06}). Therefore, having positive entropy is a stronger property than being of infinite order. 

Our main result is the following generalization of \cite[Theorem~1.2]{DORV26} and \cite[Corollary~1.2]{ASV26}.

\begin{thm}\label{thm:main}
    Let $\pi \colon \cX\to \Delta$ be a non-isotrivial polarized family of hyper-Kähler manifolds with $b_2 \ge 6$ over a small disk $\Delta$.
    Then there exists a dense subset $\mathcal{D}\subset \Delta$, in the Euclidean topology, such that for $t\in \mathcal{D}$, the fiber $\cX_t$ carries a birational automorphism of positive algebraic entropy.
\end{thm}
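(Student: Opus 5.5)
The plan is to reduce the statement to a lattice-theoretic problem on the period domain and then apply the Torelli-type description of birational automorphisms. Fix a marking $H^2(\cX_t,\Z)\cong\Lambda$, which is possible since $\Delta$ is simply connected. Let $h\in\Lambda$ be the class of the polarization and consider the period map $\mathcal{P}\colon\Delta\to\Omega_{h^\perp}=\{[\sigma]\in\P(h^\perp\otimes\C): \sigma^2=0,\ \sigma\bar\sigma>0\}$. Non-isotriviality together with local Torelli means $\mathcal{P}$ is non-constant, so its image is a holomorphic curve germ. For $t\in\Delta$ the Néron--Severi lattice is $\Pic(\cX_t)=\{x\in\Lambda:(x,\sigma_t)=0\}$, and it contains $h$.

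\textbf{Step 1: Density of jumping loci.} For a lattice vector $v\in h^\perp$ with $v^2<0$, the locus $\{t:(v,\sigma_t)=0\}$ is the preimage of a hyperplane section $\Omega_{v^\perp}$. The union over negative $v$ of the $\Omega_{v^\perp}$ is dense in $\Omega_{h^\perp}$; more usefully, for any fixed sublattice type the corresponding loci are dense. Since $\mathcal{P}(\Delta)$ is a non-constant curve and these hyperplanes are dense in the Grassmannian sense (the same argument as in \cite{DORV26}), the set of $t$ with $\mathcal{P}(t)\in\Omega_{v^\perp}$ is dense in $\Delta$.

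I would choose $v$ so that $\Pic(\cX_t)\supseteq L_v:=\langle h,v\rangle$ is a rank-2 hyperbolic lattice that is anisotropic and has no vectors of small negative square relevant to wall divisors. For example, take $\mathrm{disc}(L_v)=-d$ with $d$ not a square, and the minimal represented negative numbers large compared with the bounded set of wall-divisor squares (Amerik--Verbitsky / Mongardi boundedness for a fixed deformation type). This requires enough freedom in choosing $v\in h^\perp$: $h^\perp$ has signature $(2,b_2-3)$, with $b_2-3\ge 3$, so it is indefinite of rank at least $5$. By Meyer it is isotropic, hence it represents all sufficiently divisible negative integers with flexible arithmetic. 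I expect this is where $b_2\ge 6$ is used.

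\textbf{Step 2: Producing the automorphism.} For a very general $t$ in the jumping locus of such $v$, one has $\Pic(\cX_t)=L_v$ exactly. This holds because the further jumping loci are countably many proper analytic subsets of the curve, and a dense set minus a countable union of discrete sets is still dense. Since $L_v$ is anisotropic, $\mathrm{O}(L_v)$ is infinite and contains a hyperbolic isometry $\phi$ of the positive cone, with spectral radius $>1$, coming from units of a real quadratic order. Replace $\phi$ by a power so that it lies in the finite-index subgroup extending to a monodromy Hodge isometry: it acts trivially on the discriminant group $A_{L_v}$, extends by the identity on $T=L_v^\perp$, and lands in $\Mon^2$. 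Because there are no wall divisors in $L_v$ (by the choice above), the movable cone equals the positive cone. By Markman's Torelli theorem, a monodromy Hodge isometry preserving the movable cone is induced by a birational automorphism $g$, and then $\lambda_1(g)=\rho(\phi)>1$.

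The main obstacle is the arithmetic in Step 1: guaranteeing simultaneously that the lattices $L_v$ have no wall divisors and that the loci remain dense along an arbitrary curve. If avoiding wall divisors proves awkward, I would instead allow them and compose $\phi$ with reflections and Weyl chamber arguments. Concretely, the group of monodromy Hodge isometries is commensurable with $\Bir(X)\ltimes W_{\mathrm{Exc}}$ (Markman), so $\Bir(\cX_t)$ contains a finite-index subgroup of the image of $\tO(L_v)$ modulo a reflection group. Since $\rank=2$ and $L_v$ is anisotropic, the reflection group is finite, so an element of positive entropy survives.
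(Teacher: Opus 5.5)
The Torelli argument in your Step~2 is fine once $\Pic(\cX_t)=L_v$ exactly; this is essentially the paper's rank-$1$ case. The gap is in getting there.

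\textbf{Families with larger generic Picard lattice.} The theorem only assumes the family is polarized and non-isotrivial, so the Picard lattice $\Lambda_0$ of the very general fibre may have rank $\ge 2$. Then $\Pic(\cX_t)\supseteq\Lambda_0$ for every $t$, and $\Pic(\cX_t)=L_v$ is impossible unless $L_v\otimes\Q=\Lambda_0\otimes\Q$. When $\rank\Lambda_0\ge 3$ your construction yields nothing. When $\rank\Lambda_0=2$, the lattice $\Lambda_0$ may be isotropic or contain exceptional classes.

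\textbf{Points where the Picard rank jumps.} Even when $\Lambda_0=\Z h$, the argument fails. For a single $v$, the locus $\{t:(v,\sigma_t)=0\}$ is a discrete subset of $\Delta$, so ``a very general $t$ in it'' has no meaning. The union over all admissible $v$ is a \emph{countable} dense set. It could lie entirely inside the countably many loci where $\rho\ge 3$, since any countable set is a countable union of points. So ``dense minus a countable union of discrete sets is dense'' is false here. At a point with $\rho(\cX_t)\ge 3$, knowing $L_v\subseteq\Pic(\cX_t)$ says nothing about entropy: extra prime exceptional divisors can make $\Bir(\cX_t)$ finite.

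\textbf{The missing idea.} You need a finiteness statement covering \emph{all} zero-entropy fibres of Picard rank $\ge 3$.
\begin{enumerate}
\item If $\Bir(X)$ is infinite of zero entropy, Theorem~\ref{thm:Oguiso07} gives an isotropic class fixed by $\Mon^2_{\Bir}(X)$.
\item Together with Markman's decomposition $\Mon^2_{\Hdg}(X)=W_{\Exc}(X)\rtimes\Mon^2_{\Bir}(X)$, this makes $\Pic(X)$ parabolic reflective.
\item Nikulin's Theorem~\ref{thm:Nikulin96}, the finite case from \cite{DORV26}, and Huybrechts' finiteness \cite{Huyb18} then put all such period points in finitely many loci $\cZ_1,\dots,\cZ_n$ (Propositions~\ref{prop:finlattices} and~\ref{prop:DivisorsZi}).
\end{enumerate}
Since there are only finitely many, $\psi^{-1}$ of the union of those $\cZ_j$ not containing $\psi(\Delta)$ is discrete. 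It can therefore be removed from any dense set without destroying density.

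This handles two situations:
\begin{enumerate}
\item Families with $\rank\Lambda_0\ge 2$: apply it to Oguiso's dense set of special points, after disposing of the case where very general fibres already have positive entropy.
\item The $\rho\ge 3$ points in the rank-$1$ case. There your rank-$2$ lattices enter as the divisors $\cY_j$. Their density along every non-constant disk is the strong density theorem of \cite{ASV26}; it does not follow from the hyperplanes being dense in the period domain.
\end{enumerate}

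\textbf{Your fallback is incorrect.} Suppose an anisotropic rank-$2$ $\Pic(X)$ contains one prime exceptional class $E$, and let $\phi\in\Mon^2_{\Hdg}(X)$ be hyperbolic. Then the reflections $s_{\phi^k(E)}=\phi^k s_E\phi^{-k}$ all lie in $W_{\Exc}(X)$, which is therefore infinite, and $\Bir(X)$ is finite. K3 surfaces of Picard rank $2$ whose Picard lattice represents $-2$ but not $0$ are examples. So the absence of such classes is essential, not optional.
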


\begin{rem}\label{rmk:counter-example for K3 surfaces}
    The projectivity assumption cannot be relaxed. In fact, by \cite{Oguiso08} it is possible to construct a family of K3 surfaces over a small disk where the very general fiber is non-projective, and every fiber has a finite automorphism group. More precisely, by \cite[Theorem~1.5]{Oguiso08} any K3 surface of Picard rank $1$ has finite automorphism group, and by \cite[Corollary~1.6]{Oguiso08} there are at most countably many K3 surfaces with Picard rank $0$ and non-trivial automorphism group. We obtain the desired family by taking a small disk in the Kuranishi space of K3 surfaces that avoids K3 surfaces of Picard rank greater than $1$ and the countably many K3 surfaces of Picard rank $0$.
\end{rem}

As a corollary, we get the same density result for any family of projective hyper-K\"ahler manifolds.

\begin{cor}\label{cor:corollary}
    Let $\pi \colon \cX\to B$ be a non-isotrivial family of projective hyper-Kähler manifolds with $b_2\ge 6$ over an irreducible complex analytic space $B$. Then there exists a dense subset $\mathcal{D}\subset B$, in the Euclidean topology, such that for $t\in \mathcal{D}$, the fiber $\cX_t$ carries a birational automorphism of positive algebraic entropy.
\end{cor}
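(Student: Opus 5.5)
The plan is to reduce Corollary~\ref{cor:corollary} to Theorem~\ref{thm:main} by working locally on $B$. Density in the Euclidean topology is a local property: it suffices to show that every point $b\in B$ and every Euclidean neighbourhood $U$ of $b$ contain a point $t$ whose fiber $\cX_t$ carries a birational automorphism of positive entropy. Thus I fix $b$ and $U$ and look for a one-parameter subfamily through a point of $U$ to which Theorem~\ref{thm:main} applies.

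The first step is to produce a relative polarization locally. Since every fiber is projective, $\cX_b$ carries an ample class $h\in H^2(\cX_b,\Z)$. Identify $H^2(\cX_t,\Z)\cong H^2(\cX_b,\Z)$ over a small contractible neighbourhood, and consider the locus $B_h\subset B$ where $h$ remains of type $(1,1)$. This locus is a closed analytic subset, the preimage under the period map of a hyperplane section of the period domain.

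The main obstacle is that $B_h$ may be a proper subset of $B$. Since $\Pic(\cX_t)$ is nonzero for all $t$ and there are only countably many classes $h$, Baire category applied to the closed analytic sets $B_h$ shows that some class $h$ (after replacing $b$ by a nearby point) satisfies $B_h\supseteq V$ for an open set $V\subset U$. Because $B$ is irreducible, $B_h=B$ near that point. Positivity then propagates by openness of ampleness: $h$ is ample near the point, so after shrinking we obtain a polarized family over $V$.

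Next I use non-isotriviality. The restriction $\cX|_V$ is still non-isotrivial, because fibers isomorphic to a fixed one form a closed analytic locus, and if that locus contained an open set it would be all of the irreducible $B$. Hence the period map is nonconstant on $V$. I then choose a holomorphic disk $\Delta\to V$ through a smooth point on which the period map is nonconstant, for instance a generic line in a local chart. The pullback family over $\Delta$ is a non-isotrivial polarized family of hyper-K\"ahler manifolds with $b_2\ge 6$.

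Finally, Theorem~\ref{thm:main} gives a dense set of $s\in\Delta$ whose fibers carry birational automorphisms of positive algebraic entropy. Any one of these points lies in $V\subset U$, which establishes density. If $B$ is singular, I would first pass to its normalization or to a resolution; this is surjective and preserves density of images, so the argument above applies there.
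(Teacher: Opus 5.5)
Your route is essentially the paper's: restrict to an arbitrary nonempty open subset of (a resolution of) $B$, check that non-isotriviality survives there, choose a disk and apply Theorem~\ref{thm:main}. The paper phrases this by contradiction on $B\setminus\overline{\mathcal{D}}$, which is only a cosmetic difference. Your explicit construction of a local polarization is a real improvement. The paper only asserts that the disk family is ``projective'', whereas Theorem~\ref{thm:main} requires a polarized family. Two of your justifications need repair, however.

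\emph{The Baire step.} As written it only uses $\Pic(\cX_t)\neq 0$. It therefore yields \emph{some} nonzero class $h$ with $B_h\supseteq V$. Such an $h$ may have negative square and be ample on no fiber, so ``openness of ampleness'' has nothing to start from. Run the argument instead over the countably many classes that are ample on at least one fiber over $U$. These $B_h$ still cover $U$, because every fiber is projective. Baire then gives such an $h$ with $B_h$ of nonempty interior, so $B_h$ is the whole connected smooth neighbourhood. Since $h$ is ample on some fiber in it, $h$ is ample on nearby fibers. Equivalently, take $h$ ample on a very general fiber, whose Picard lattice is the generic one and hence stays of type $(1,1)$ throughout the neighbourhood.

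\emph{Non-isotriviality of $\cX|_V$.} You claim that the fibers isomorphic to a fixed one form a closed analytic locus. This is not a general fact: isomorphism type can jump in families, e.g.\ $\mathbf{F}_0$ specializing to $\mathbf{F}_2$. For hyper-K\"ahler families it would itself need a genuine argument, and you do not need it. Non-isotriviality means the Kodaira--Spencer map $T_B\to R^1\pi_*T_{\cX/B}$ is not identically zero. The target is locally free, since $h^1(T_{\cX_t})=h^{1,1}(\cX_t)$ is constant. Hence if the map vanished on a nonempty open subset of the smooth irreducible base, its image would be a torsion subsheaf and thus zero, so the map would vanish everywhere. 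This is exactly the paper's argument; alternatively, apply the identity principle to the local period maps.

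With these two fixes your proof goes through.
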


\begin{rem}
    If we replace hyper-K\"ahler manifolds with strict Calabi--Yau manifolds, the result does not hold in general. For example, any smooth hypersurface of degree $n+1$ in $\P^n$ has finitely many birational self-maps when $n\geq4$. This shows the need to work with hyper-K\"ahler manifolds.
\end{rem}

\begin{rem}
    Combining our Corollary~\ref{cor:corollary} with \cite[Main Theorem.(3)]{LoBianco}, we infer that in a smooth family of projective hyper-K\"ahler manifolds with $b_2\ge 6$ there is a dense subset of the base where the corresponding members of the family have a birational automorphism with a Zariski-dense generic orbit.
\end{rem}

\begin{ex}
    In \cite[Example 1.4.(1)]{DORV26} the authors exhibit a family of hyper-K\"ahler fourfolds whose members have an infinite birational automorphism group. By Theorem~\ref{thm:main} and Remark~\ref{rmk:rho 2}, we can upgrade this example by noting that, in fact, all members of the family have a birational automorphism of positive algebraic entropy.
\end{ex}

\subsection*{Acknowledgments}
Much of this project was carried out when F.\ A.\ Denisi, C.\ Onorati, F.\ Rizzo, and S.\ Viktorova visited K.\ Oguiso at the University of Tokyo. We thank the University of Tokyo and the grants JSPS 25H00587 and 25K21992 (P.I.\ K.\ Oguiso) for making this collaboration possible. Denisi is supported by the Deutsche Forschungsgemeinschaft (DFG, German Research Foundation) – Project ID 286237555 (TRR 195). Onorati acknowledges the support of \enquote{National Group for Algebraic and Geometric Structures, and their Applications} (INdAM--GNSAGA). Rizzo acknowledges funding from the European Research Council under the European Union's Horizon 2020 research and innovation program (ERC-2020-SyG-854361-HyperK). Oguiso and Viktorova thank NCTS for its support. Onorati and Rizzo are part of the INdAM--GNSAGA.

\section{Preliminaries}

\subsection{Parabolic reflective lattices}\label{section:lattices}
A lattice is a finitely generated free abelian group $L$, endowed with an integer-valued, nondegenerate symmetric bilinear form. In what follows, we will mostly deal with \emph{hyperbolic} lattices, that is, lattices for which, if the rank is $n$, the associated bilinear form has signature $(1,n-1)$. As usual, we say that a vector $v\in L$ is \emph{isotropic} if $v^2=0$.

Let $L$ be a hyperbolic lattice. We denote by $\tO(L)$ the group of isometries of $L$, that is, the group of automorphisms of $L$ preserving the bilinear form.
The positive cone $\Pos(L)$ of $L$ is one of the two connected components of the set
\[
\{x\in L_\R:=L\otimes \R\mid x^2>0\}.
\]
We have an index $2$ subgroup of $\tO(L)$, defined as
\[
\tO^+(L):=\{g\in \tO(L)\mid g(\Pos(L))=\Pos(L)\}.
\]
Denote by $W(L)$ the subgroup of $\tO^{+}(L)$ generated by the reflections $s_{\alpha}$, where
\[
s_{\alpha}(x)=x-\frac{2x\cdot\alpha}{\alpha^2}\alpha
\]
for every $x\in L$, whenever this formula defines an element of $\tO^{+}(L)$.
Let $W\subseteq W(L)$ be a reflection group and $\mathcal{M}\subseteq L_{\R}$ a fundamental domain of $W$. Define
\[
    \cA(\mathcal{M}) := \{g\in\tO^{+}(L) \mid g(\mathcal{M}) = \mathcal{M} \}.
\]
The following definition, adapted from \cite{Nikulin96}, will play an important role in this paper.

\begin{defn}[\protect{\cite[Definition 1.1.2]{Nikulin96}}]\label{defn:parabolic}
    Let $L$ be a hyperbolic lattice. A reflection group $W\subset W(L)$ is called \textit{parabolic} if the group $\cA(\mathcal{M})$ is infinite, $\big[\tO^{+}(L)\colon W \rtimes \cA(\mathcal{M})\big]<+\infty$ and there is an element $0\neq v\in L$ such that $v^2=0$ and $g(v)=v$, for any $g\in\cA(\mathcal{M})$.
    The lattice $L$ is called \emph{parabolic reflective} if $\tO^{+}(L)$ contains a parabolic reflection group $W$.
\end{defn}

\begin{thm}[\protect{\cite[Theorem 1.1.3]{Nikulin96}}]\label{thm:Nikulin96}
    For each fixed $\rank(L)\geq 3$, the set of parabolic reflective hyperbolic lattices $L$ is finite up to isomorphism and similarity $L\mapsto L(m)$, where $L(m)$ is the lattice obtained by multiplying the bilinear form of $L$ by $m$.
\end{thm}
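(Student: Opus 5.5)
The plan is to reduce the statement to the classical finiteness of lattices with fixed rank and bounded discriminant. To do this I would extract from a parabolic reflection group $W$ a finite configuration of roots whose Gram matrix, after rescaling $L$ to a primitive form $L(1/m)$, has entries bounded by a constant depending only on $n=\rank(L)$.

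\textbf{Step 1: the cusp structure.} Fix $W$, $\mathcal{M}$ and the isotropic vector $v$ of Definition~\ref{defn:parabolic}. Rescale $v$ to be primitive.
\begin{itemize}
\item The stabiliser of the ray $\R_{>0}v$ in $\tO^+(L)$ acts on the horospheres centred at $v$ by Euclidean similarities. Via $E:=v^\perp/\Z v$, a negative definite lattice of rank $n-2$ (after a sign change), these become affine isometries of $E_\R$.
\item Since $\cA(\mathcal{M})$ is infinite and fixes $v$, it contains a nontrivial translation subgroup $T$ of this affine group. The finite-index condition $[\tO^+(L):W\rtimes\cA(\mathcal{M})]<\infty$, together with finiteness of the hyperbolic volume away from the cusps, should force $T$ to be of full rank $n-2$ up to the finite part.
\end{itemize}
The walls of $\mathcal{M}$ then split into two kinds:
\begin{itemize}
\item the roots $\alpha$ with $\alpha\cdot v=0$, which give an affine reflection group on $E_\R$;
\item the roots with $\alpha\cdot v\neq 0$, whose number is finite modulo $T$.
\end{itemize}

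\textbf{Step 2: bounded Gram matrix.} Choose finitely many walls of $\mathcal{M}$ near the cusp: a set of representatives modulo $\cA(\mathcal{M})$ of the walls meeting a fixed fundamental domain for $T$ on a horosphere. I would argue, in the spirit of Nikulin's method of narrow parts, that these roots satisfy two conditions.
\begin{itemize}
\item Their pairwise angles are bounded, because they bound a polyhedron of finite volume near $v$. This should give $|\alpha_i\cdot\alpha_j|\le C(n)\sqrt{|\alpha_i^2\alpha_j^2|}$.
\item Their norms satisfy the root condition $\alpha^2 \mid 2(\alpha\cdot x)$ for all $x\in L$.
\end{itemize}
Combining the root condition with primitivity of $L$ up to similarity bounds the ratios $\alpha_i^2/\alpha_j^2$, and after rescaling bounds all the $\alpha_i^2$ themselves. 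Adding $v$ and the pairings $\alpha_i\cdot v$, which are also controlled by the root condition, I obtain finitely many elements spanning a finite-index sublattice $L'\subseteq L$. The Gram matrix of $L'$ has entries bounded by a constant depending only on $n$.

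\textbf{Step 3: conclusion and main obstacle.} The finiteness of $L'$ up to isomorphism follows from the bounded Gram matrix. Its index in $L$ is bounded by $|\operatorname{disc}(L')|^{1/2}$, so there are only finitely many overlattices $L$, which gives the statement up to similarity.

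The main obstacle is Step 2: producing \emph{uniform} bounds on the angles and norms of the chosen roots from the qualitative hypotheses ($\cA(\mathcal{M})$ infinite, finite index, fixed cusp). The first thing I would try is the argument Vinberg and Nikulin use for arithmetic reflection groups: a narrow part of $\mathcal{M}$ at the cusp has bounded combinatorics in fixed dimension. Combined with the crystallographic restrictions on the finite part of the affine group acting on $E$, this should cap the number of wall types and their mutual angles.
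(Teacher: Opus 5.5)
\emph{Verdict: genuine gap.} The paper gives no proof of this statement; it quotes it from \cite{Nikulin96}. So your outline has to stand on its own, and it does not. Step~2 is where the whole theorem lives, and it is asserted rather than proved. Step~3 is fine.

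The inference ``the chosen roots bound a polyhedron of finite volume near $v$, hence $|\alpha_i\cdot\alpha_j|\le C(n)\sqrt{|\alpha_i^2\alpha_j^2|}$'' is not valid.
For two walls that do not meet, $|\alpha_i\cdot\alpha_j|/\sqrt{|\alpha_i^2\alpha_j^2|}$ is the hyperbolic cosine of their distance. Acute-angledness controls only walls that intersect.
Finiteness of the volume of $\mathcal{M}/\cA(\mathcal{M})$ holds for every such $L$. It gives no $L$-independent control on how large a fundamental domain of $T$ on a horosphere is compared with the hemispherical walls. A priori such a domain may be covered by many small hemispheres, and then ``all walls meeting a fundamental domain'' contains pairs at unbounded distance.
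What is needed is a \emph{specific} small configuration spanning $L\otimes\Q$ whose Gram matrix is bounded in terms of $n$ alone. Producing it is exactly what Nikulin's narrow-part method does, via a separate selection argument that you invoke but do not carry out. As you state it, Step~2 is essentially equivalent to the finiteness you want to prove, so the argument is circular at its key point.

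Two further claims in Steps~1--2 are wrong as written.
\begin{enumerate}
\item The entries $\alpha_i\cdot v$ are not controlled by the root condition. Put $v$ at infinity in the upper half-space model. Then a wall with $\alpha\cdot v\neq 0$ is a hemisphere of radius $\sqrt{|\alpha^2|}/|\alpha\cdot v|$ (suitably normalized). The root condition $\alpha^2\mid 2(\alpha\cdot v)$ gives $|\alpha\cdot v|\ge |\alpha^2|/2$, a bound from \emph{below}. To keep the entries $\alpha_i\cdot v$ bounded when you adjoin $v$, you need an upper bound, i.e.\ that the chosen hemispheres are not too small. That is again a uniform cusp estimate you have not proved.
\item $T$ need not have rank $n-2$. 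The reflections of $W$ in roots orthogonal to $v$ generate an affine reflection group $W_v$ of some rank $k\ge 0$ on $E_{\R}$. The chamber near the cusp is then a compact product of alcoves times $\R^{n-2-k}$. Only $W_v\rtimes\cA(\mathcal{M})$ is cocompact on the horosphere, so the translations in $\cA(\mathcal{M})$ have rank $n-2-k$. Compare Shioda--Tate for elliptic K3 surfaces with reducible fibres.
\end{enumerate}

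Two smaller points. The ratio bound on the $\alpha_i^2$ needs the configuration to have connected Coxeter graph. The norms should be normalized by rescaling with $2/d$, where $d=\gcd_i(\alpha_i^2)$, and using $L\subseteq \tfrac{d}{2}L'^{*}$. Primitivity of $L$ alone does not bound them.
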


Let $g\in \tO^+(L)$. The dynamical degree $\lambda_1(g)$ of $g$ is the maximum absolute value of the eigenvalues of the $\C$-linear extension $g_{\C}$ of $g$ to $L_\C:=L\otimes \C$. By abuse of notation, we call
\[
h(g):=\log\lambda_1(g)
\]
the \textit{algebraic entropy} of $g$. Since $g\in \tO^+(L)$, we have $\det (g) = \pm1$, hence, $h(g)\geq 0$.

\begin{thm}[\protect{\cite[Theorem 2.1]{Oguiso07}}]\label{thm:Oguiso07}
    Let $L$ be a hyperbolic lattice of rank $r$ and let $G$ be a subgroup of $\tO^{+}(L)$. Assume that $G$ is an infinite group of zero algebraic entropy, i.e., $|G|=+\infty$ and $h(g)=0$ for all $g\in G$. Then: 
    \begin{enumerate}[label = \rm{(\arabic*)}]
        \item\label{oguiso-part1} there is an isotropic vector $0\neq v\in L$ such that $g(v)=v$ for all $g\in G$; 
        \item\label{oguiso-part2} the group $G$ is virtually abelian. 
    \end{enumerate}
\end{thm}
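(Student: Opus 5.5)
We plan to use the classification of isometries of hyperbolic space, applied to the integral group $G\subset \tO^{+}(L)\subset \tO(1,r-1)$. The proof will show that $G$ cannot contain two parabolic elements with different fixed points at infinity, and then read off both conclusions from the structure of the stabilizer of an isotropic vector.

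\emph{Step 1 (zero entropy elements).} Let $g\in G$. Since $g^{-1}\in G$ also has $h(g^{-1})=0$, every eigenvalue of $g_\C$ has absolute value $1$. The characteristic polynomial of $g$ is monic with integer coefficients, so by Kronecker's theorem all eigenvalues are roots of unity. Hence some power $g^m$ is unipotent.

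Recall the trichotomy for $\tO^{+}(L_\R)$: elliptic, parabolic, loxodromic, where loxodromic means there is a real eigenvalue $>1$. By the above, no element of $G$ is loxodromic.

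If $g$ fixes a vector of positive square, then $g$ acts on its negative definite orthogonal complement. It therefore preserves a positive definite form on the lattice $L$ (up to finite index), so $g$ has finite order.

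Thus every infinite-order $g\in G$ is parabolic: $g^m$ is unipotent and nontrivial, and it fixes a unique isotropic line $\ell_g$. This line is the unique isotropic line in $\ker(g^m-1)$, which is defined over $\Q$, so $\ell_g$ is rational.

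\emph{Step 2 (existence of a parabolic element).} If $G$ were torsion, it would be a torsion subgroup of $\mathrm{GL}(r,\Z)$. By Burnside--Schur such a group is finite, contradicting $|G|=+\infty$. So we may fix a parabolic $g\in G$ with its isotropic line $\ell_g$.

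\emph{Step 3 (common fixed line; the main obstacle).} Let $h\in G$. The conjugate $g'=hgh^{-1}\in G$ is parabolic with fixed line $h(\ell_g)$. We must show $h(\ell_g)=\ell_g$. Suppose not.

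Work on the boundary sphere $\partial\mathbb{H}$, i.e. isotropic rays in the closure of $\Pos(L)$. Let $p$ and $p'$ be the fixed points of $g$ and $g'$. A parabolic element $g$ has north--south type dynamics: for any compact $K\subset\partial\mathbb{H}\smallsetminus\{p\}$ and any neighbourhood $U$ of $p$, we have $g^{\pm N}(K)\subset U$ for $N\gg0$.

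Choose disjoint small neighbourhoods $U\ni p$ and $U'\ni p'$. For $N$ large, $g'^N$ maps $\overline U$ into $U'$, and then $g^N$ maps $\overline{U'}$ into $U$. So $f=g^Ng'^N$ maps $\overline U$ strictly inside $U$. By a ping-pong / contraction argument, $f$ has an attracting fixed point on $\partial\mathbb{H}$. An isometry with an attracting boundary fixed point is loxodromic, so $h(f)>0$, a contradiction.

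This dynamical step is where care is needed. One should check contraction in the projectivized cone rather than merely shrinking neighbourhoods, for instance using the Hilbert metric or an explicit matrix computation for unipotent $g^m$. Once this is done, every element of $G$ preserves $\ell_g$. Let $v$ be a primitive integral generator of $\ell_g$. Since $G\subset\tO^{+}(L)$ preserves $\overline{\Pos(L)}$, we get $h(v)=v$ for all $h\in G$. This proves part \ref{oguiso-part1}.

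\emph{Step 4 (virtual abelianity).} Now $G\subset \mathrm{Stab}(v)$, which preserves $v^\perp\supset \Z v$. The form on $v^\perp/\Z v$ is negative definite. Hence the image of $G$ in $\tO(v^\perp/\Z v)$ is finite. Let $G_0$ be the kernel of this map, a subgroup of finite index in $G$.

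An isometry fixing $v$ and acting trivially on $v^\perp/\Z v$ is an Eichler-type transvection $E_{v,u}$ with $u\in v^\perp_\Q/\Q v$, and these satisfy $E_{v,u}E_{v,u'}=E_{v,u+u'}$. Therefore $G_0$ is abelian, indeed isomorphic to a subgroup of $\Z^{r-2}$. This proves part \ref{oguiso-part2}.
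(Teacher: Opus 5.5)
The paper does not prove this statement; it quotes it from Oguiso's 2007 paper. The argument there is, in substance, hyperbolic-geometric. Kronecker's theorem shows that a null-entropy element of $\tO^{+}(L)$ is elliptic or parabolic. One then appeals to the structure theory of discrete groups of isometries of $\mathbb{H}^{r-1}$ with no loxodromic elements: such a group is elementary, hence finite or of parabolic type, and a discrete group of parabolic type fixes a unique boundary point and is virtually free abelian of rank at most $r-2$ by Bieberbach.

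Your proposal is correct and follows the same logic, but proves both black boxes by hand. The ping-pong in Step~3 replaces the classification of elementary groups. The Eichler-transvection computation, together with the finiteness of $\tO(v^\perp/\Z v)$, replaces Bieberbach. This gives a self-contained integral argument in which the rank bound $r-2$ comes for free. The rationality of the fixed isotropic line is also transparent: it is the radical of the rational subspace $\ker(g^m-1)$, which is negative semi-definite because it contains no positive vectors. The cost is the dynamical verification in Step~3.

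On that verification, the sentence ``an isometry with an attracting boundary fixed point is loxodromic'' is false for the naive notion of attracting. The fixed point of a parabolic element attracts every boundary point under both forward and backward iteration. What you actually have is the trapping property $f(\overline U)\subset U$ for a small ball $U$, and this already contradicts Step~1 without any Hilbert-metric estimate.

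\emph{Finite order.} If $f$ had finite order $k$, then $\overline U=f^k(\overline U)\subset U$ would make $U$ clopen in the connected sphere $\partial\mathbb{H}$. Connectedness holds because $r\ge 3$ as soon as a parabolic element exists, since $\tO(1,1)$ has no non-trivial unipotents.

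\emph{Infinite order.} Otherwise $f^m$ is a non-trivial unipotent with fixed point $q$, and $f^{\pm mk}(x)\to q$ for every boundary point $x$. Put $V=\partial\mathbb{H}\smallsetminus U\neq\emptyset$. The trapping property gives $f^{-1}(V)\subset\partial\mathbb{H}\smallsetminus\overline U$. The decreasing chains $f^{k}(\overline U)\subset f(\overline U)$ and $f^{-k}(V)\subset f^{-1}(V)$ consist of compact sets. Letting $k\to\infty$, they give $q\in f(\overline U)\subset U$ and $q\in f^{-1}(V)\subset\partial\mathbb{H}\smallsetminus\overline U$, which is a contradiction.

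With this replacing the ``attracting fixed point'' sentence, your proof is complete.
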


\subsection{Hyper-Kähler manifolds of zero algebraic entropy}
Let $X$ be a hyper-K\"ahler manifold. Recall that the torsion free abelian group $H^2(X,\Z)$ is endowed with a non-degenerate quadratic form $q_X$ turning it into a lattice of signature $(3,b_2(X)-3)$. 
Throughout this sub-section we assume that $b_2(X)\geq5$.
If $X$ is projective, the Picard lattice $(\Pic(X),q_X)$ is hyperbolic of signature $(1,\rho(X)-1)$, where $\rho(X)$ is the Picard rank. We refer to \cite[Corollary~23.11]{GHJ03} for a general treatment of these facts. From now on, all manifolds are assumed to be projective.

Recall that a prime divisor $E$ on a projective hyper-Kähler manifold $X$ is exceptional if $q_X(E)<0$. By \cite{Markman:Galois}, the reflection $s_E$ is integral (and in fact a monodromy operator). We denote then by $W_{\Exc}(X)$ the subgroup of $\tO^+(H^2(X,\Z))$ generated by reflections with respect to classes of stably prime exceptional divisors. These isometries preserve the Hodge structure, so they also act on $\Pic(X)$. As usual, we abuse notation and whenever needed we consider the restriction of the action of a Hodge isometry to $\Pic(X)$ without specifying it. The closure of the movable cone $\overline{\Mov}(X)\subseteq\Pic(X)\otimes\R$ is a fundamental domain for the action of $W_{\Exc}(X)$ (\cite[Lemma~6.22]{Markman11}). Finally, we denote by $\Mon^2_{\Bir}(X)$ the image of the action of $\Bir(X)$ on $H^2(X,\Z)$; recall that this action has finite kernel (\cite[Proposition~9.1]{Huybrechts:Basic}). By \cite[Theorem~6.18]{Markman11}, we have a decomposition
\begin{equation}\label{eqn:W x Bir}
   W_{\Exc}(X) \rtimes \text{Mon}^2_{\Bir}(X) = \text{Mon}^2_{\Hdg}(X)
\end{equation}
and the group $\Mon^2_{\Bir}(X)$ is identified with the stabiliser of the movable chamber, i.e.\ $\Mon^2_{\Bir}(X)=\cA\big(\overline{\Mov}(X)\big)\subseteq\tO^+(\Pic(X))$ in the notations of Section~\ref{section:lattices}.

In the following we say that $\Bir(X)$ has zero algebraic entropy if $h(g)=0$ for every $g\in\Bir(X)$.

\begin{lem}\label{lem:NullEntropyParabolic}
    Let $X$ be a projective hyper-Kähler manifold such that $\Bir(X)$ is infinite of zero algebraic entropy. Then the lattice $\Pic(X)$ is parabolic reflective.
\end{lem}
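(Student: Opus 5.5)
We take $L=\Pic(X)$, $W=W_{\Exc}(X)$ restricted to $\Pic(X)$, and $\mathcal{M}=\overline{\Mov}(X)$, and check the three conditions of Definition~\ref{defn:parabolic}. First, $W$ is a reflection subgroup of $W(\Pic(X))$. Its generators $s_E$ are integral Hodge isometries of $H^2(X,\Z)$ preserving $\Pic(X)$, and they preserve the positive cone. Each restricts to the reflection of $\Pic(X)_\R$ in $E$, hence lies in $W(\Pic(X))$. By \cite[Lemma~6.22]{Markman11}, $\overline{\Mov}(X)$ is a fundamental domain for $W$. Moreover, \eqref{eqn:W x Bir} identifies $\Mon^2_{\Bir}(X)$ with $\cA(\overline{\Mov}(X))$.

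\emph{Infiniteness of $\cA(\mathcal{M})$.} The action of $\Bir(X)$ on $H^2(X,\Z)$ has finite kernel, so $\Mon^2_{\Bir}(X)$ is infinite. We must pass to its restriction to $\Pic(X)$. The kernel of $\Mon^2_{\Bir}(X)\to\tO(\Pic(X))$ consists of Hodge isometries acting trivially on $\Pic(X)$. Such isometries act on the transcendental lattice $T(X)$, preserving $H^{2,0}$. The group of Hodge isometries of $T(X)$ is finite (a standard fact: it is cyclic, being a group of isometries of a lattice that preserves the positive-definite part spanned by $\mathrm{Re}\,\sigma,\mathrm{Im}\,\sigma$). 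Hence this kernel is finite, and the image $\cA(\mathcal{M})\subseteq\tO^+(\Pic(X))$ is infinite. Entropy is unchanged by restriction: the eigenvalues on $T(X)$ are roots of unity, by the same finiteness. So every element of $\cA(\mathcal{M})$ has zero entropy.

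\emph{Finite index.} $\Mon^2_{\Hdg}(X)$ has finite index in the group of Hodge isometries of $H^2(X,\Z)$, since $\Mon^2(X)$ has finite index in $\tO(H^2(X,\Z))$ by Markman. We then compare with $\tO^+(\Pic(X))$. The subgroup of $\tO^+(\Pic(X))$ acting trivially on the discriminant group $A_{\Pic(X)}$ has finite index. Each such isometry extends, by the identity on $T(X)$, to a Hodge isometry of $H^2(X,\Z)$; this uses the gluing $H^2\supseteq \Pic\oplus T$. Intersecting with the finite-index subgroup $\Mon^2_{\Hdg}(X)$ and restricting, we get a finite-index subgroup of $\tO^+(\Pic(X))$ contained in the restriction of $W\rtimes\cA(\mathcal{M})$. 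This index bookkeeping is the step I expect to be the most delicate. In particular, one must check that restriction to $\Pic(X)$ respects the semidirect decomposition, i.e.\ that $W\cap\cA(\mathcal{M})$ stays trivial after restriction. This holds since $W$ acts simply transitively on chambers in $\Pic(X)_\R$.

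\emph{Fixed isotropic vector.} Apply Theorem~\ref{thm:Oguiso07}\ref{oguiso-part1} to $G=\cA(\mathcal{M})\subseteq\tO^+(\Pic(X))$, which is infinite of zero entropy. This gives $0\neq v\in\Pic(X)$ with $v^2=0$ fixed by all of $\cA(\mathcal{M})$. Hence $W$ is parabolic and $\Pic(X)$ is parabolic reflective.
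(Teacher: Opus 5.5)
Your proposal is correct and follows the same route as the paper. You take $W=W_{\Exc}(X)$ and $\mathcal{M}=\overline{\Mov}(X)$, get infiniteness of $\cA(\mathcal{M})$ from $\Mon^2_{\Bir}(X)$, get the finite-index condition from Markman's decomposition \eqref{eqn:W x Bir} combined with finite index of Hodge monodromy, and obtain the fixed isotropic vector from Theorem~\ref{thm:Oguiso07}\ref{oguiso-part1}. Where the paper simply cites \cite[Lemma~6.23]{Markman11}, you spell out the details: the finite kernel of restriction to $\Pic(X)$ and the unchanged entropy, both from finiteness of Hodge isometries of $T(X)$, and a discriminant-gluing proof of the finite-index statement.
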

\begin{proof}
   We show that the group $W_{\Exc}(X)$ is parabolic reflective. Since $\Bir(X)$ is infinite, the group $\Mon^2_{\Bir}(X)$ is also infinite. Moreover, the group 
   $\Mon^2_{\Hdg}(X)$
   has finite index in $\tO^+(\Pic(X))$ (\cite[Lemma~6.23]{Markman11}). Together with (\ref{eqn:W x Bir}), this already proves the first two conditions of Definition~\ref{defn:parabolic}. Finally, Theorem~\ref{thm:Oguiso07}.\ref{oguiso-part1} concludes the proof.
\end{proof}

The following proposition generalizes \cite[Proposition~2.6]{DORV26}.

\begin{prop} \label{prop:finlattices}
    Let $X$ be a projective hyper-Kähler manifold with $\rho(X)\ge 3$ such that $\Bir(X)$ has zero algebraic entropy. Then there are finitely many lattices representing $\Pic(X)$.
\end{prop}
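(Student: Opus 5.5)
The plan is to reduce to Nikulin-type finiteness theorems for reflective lattices, and then to rule out the scaling ambiguity $L\mapsto L(m)$ with a bound coming from the geometry of prime exceptional divisors. Throughout I read the statement as: the deformation type of $X$ (hence the lattice $H^2(X,\Z)$) is fixed, and the set of isomorphism classes of lattices $\Pic(X)$, as $X$ ranges over projective hyper-K\"ahler manifolds of that type with $\rho(X)\ge 3$ and $\Bir(X)$ of zero algebraic entropy, is finite. Since $\rho(X)\le b_2(X)-2$, only finitely many ranks occur, so I may fix the rank $r\ge 3$. I would then split according to whether $\Bir(X)$ is finite or infinite.

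\emph{Case $\Bir(X)$ infinite.} By Lemma~\ref{lem:NullEntropyParabolic}, $\Pic(X)$ is parabolic reflective. Theorem~\ref{thm:Nikulin96} then says that $\Pic(X)\cong L(m)$ for $L$ in a finite list depending only on $r$, with $m\ge 1$ an integer still to be controlled.

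\emph{Case $\Bir(X)$ finite.} Then $\Mon^2_{\Bir}(X)$ is finite. By (\ref{eqn:W x Bir}) and \cite[Lemma~6.23]{Markman11}, the group $W_{\Exc}(X)$ has finite index in $\tO^+(\Pic(X))$, so $\Pic(X)$ is a reflective hyperbolic lattice. Here I would invoke the finiteness of reflective hyperbolic lattices of fixed rank $\ge 3$ up to similarity (Nikulin; Esselmann for the rank bound). The paper only states the parabolic version, so this classical result has to be imported. It again gives $\Pic(X)\cong L(m)$ with $L$ in a finite list.

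\emph{Bounding $m$.} This is the step I expect to be the real obstacle. In both cases I need $W_{\Exc}(X)\neq 1$, i.e.\ $X$ has at least one prime exceptional divisor. In the finite case this holds because $\tO^+(\Pic(X))$ is infinite for a hyperbolic lattice of rank $\ge 3$: it is an arithmetic lattice in $\tO(1,r-1)$. In the infinite case, suppose $W_{\Exc}(X)$ were trivial. Then $\Mon^2_{\Bir}(X)$ would have finite index in $\tO^+(\Pic(X))$. By Theorem~\ref{thm:Oguiso07}.\ref{oguiso-part2} it would be virtually abelian and fix an isotropic vector. This is impossible for an arithmetic subgroup of $\tO(1,r-1)$ with $r\ge3$, which is Zariski dense and hence not virtually solvable.

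So there is a prime exceptional divisor $E$. By Markman's classification of prime exceptional divisors in a fixed deformation type, the primitive class $e$ of $E$ has $q_X(e)$ bounded in absolute value by a constant $C$ depending only on the deformation type; for instance $q_X(e)\in\{-2,-2(n-1)\}$ for $\KKK^{[n]}$-type, and the integrality of $s_E$ forces $q_X(e)\mid 2\,\mathrm{div}(e)$ in general. On the other hand, every vector of $L(m)$ has square divisible by $m$. Hence $m\le C$, which gives finitely many possibilities. I would check carefully that such a uniform bound $C$ on $|q_X(e)|$ is available in an arbitrary deformation type, using only the integrality of $s_E$ and the fact that the discriminant of $H^2(X,\Z)$ is fixed. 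If this cannot be made uniform, I would fall back on stating the result for the known deformation types.
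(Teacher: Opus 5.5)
Your proposal is correct and follows essentially the paper's proof. In the infinite case you use Lemma~\ref{lem:NullEntropyParabolic} with Theorem~\ref{thm:Nikulin96}; in the finite case you use finiteness of reflective lattices up to similarity, which the paper imports via \cite[Proposition~2.6]{DORV26}; and you prove $W_{\Exc}(X)\neq 1$ by playing Oguiso's virtual abelianity against Borel--Harish-Chandra and Borel density, then bound the rescaling by the square of a prime exceptional class. The uniform bound you hesitated over holds in every deformation type, so no fallback is needed. For the primitive class $e$ one has $q_X(e)\mid 2\,\mathrm{div}(e)$, and $\mathrm{div}(e)$ divides the exponent of the discriminant group of $H^2(X,\Z)$, so $|q_X(e)|\le 2|\mathrm{disc}\,H^2(X,\Z)|$; alternatively, use the Amerik--Verbitsky bound on MBM classes invoked in the proof of Proposition~\ref{prop:ReticoliMBM}.
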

\begin{proof}
    We only need to consider the case where $\Bir(X)$ is infinite and of zero algebraic entropy, as \cite[Proposition~2.6]{DORV26} deals with $\Bir(X)$ finite. By Lemma~\ref{lem:NullEntropyParabolic} and Theorem~\ref{thm:Nikulin96}, up to rescaling, there are only finitely many isometry classes of lattices that can occur as $\Pic(X)$. To conclude, as in the proof of \cite[Proposition~2.6]{DORV26}, we only need to show that $X$ carries at least one prime exceptional divisor or, equivalently, that $W_{\Exc}(X)$ is not trivial.
    By Theorem~\ref{thm:Oguiso07}.\ref{oguiso-part2} the group $\Mon^2_{\Bir}(X)$ is virtually abelian. Suppose by contradiction that $W_{\Exc}(X)$ is trivial. Then, by (\ref{eqn:W x Bir}), also the group $\text{Mon}^2_{\Hdg}(X)$, and thus $\tO^+(\Pic(X))$, is virtually abelian. However, the group $\tO^+(\Pic(X))$ is not virtually abelian when $\rho(X)\geq 3$, by the Borel--Harish-Chandra theorem (\cite[Theorem~7.8]{BHC}) and the Borel density theorem (\cite[Section~4.3]{Borel}), giving a contradiction.
\end{proof}

\begin{rem}
    Explicitly describing the finitely many lattices in the proposition above is an interesting lattice-theoretic problem. For example, Yu recently achieved this for K3 surfaces with an infinite automorphism group of zero algebraic entropy in \cite{XunYu}.
\end{rem}

If $h\in\Pic(X)$ is the class of a polarization on $X$, we denote by $\Mon^2_h(X)$ the subgroup of $\Mon^2(X)$ preserving the class $h$. By \cite[Remark~1.1]{Huyb18} and Proposition \ref{prop:finlattices} above, up to the action of $\Mon^2_h(X)$, there are only finitely many sublattices of $h^\perp\subseteq H^2(X,\Z)$ that can occur as the Picard lattices of $h$-polarized hyper-Kähler manifolds $X$ with $\rho(X)\ge 3$ for which $\Bir(X)$ has zero algebraic entropy. We denote by $L_1,\dots,L_n$ a set of representatives of these isometry classes.

The following definition is analogous to \cite[Definition~2.2]{ASV26}.
\begin{defn}\label{defn:Z}
Consider the period space $\cP_{h^\perp}:=\cD_{h^\perp}/\Mon^2_h(X)$ of $h$-polarized hyper-Kähler manifolds, with
    \[
    \cD_{h^\perp}:=\{x \in \mathbf{P}(h^{\perp} \otimes_{\mathbf{Z}} \mathbf{C}) \mid q_X(x)=0,\quad \text{and}\quad q_X(x+\overline{x})>0\}.
    \]
    We denote by $\cZ_i$ the image of $\P(L_i^\perp\otimes \C)\cap \cD_{h^\perp}$ in the period domain $\cP_{h^\perp}$.
\end{defn}

The same proof of \cite[Theorem~1.1(1)]{ASV26} gives the corresponding statement.
\begin{prop}\label{prop:DivisorsZi}
    For any $h$-polarized hyper-Kähler manifold $X$ with $\rho(X)\ge 3$ such that $\Bir(X)$ has zero algebraic entropy, there exists a unique index $i$ such that the period point $\wp([X, h])$ is a very general point of the subvariety $\cZ_i\subset \cP_{h^\perp}$.
\end{prop}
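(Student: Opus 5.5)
We argue as in \cite[Theorem~1.1(1)]{ASV26}, relying on the finiteness in Proposition~\ref{prop:finlattices} and on the period description of Picard lattices. Let $X$ be as in the statement and write $\wp([X,h])=[\sigma]$, where $\sigma\in H^{2,0}(X)$. For any marking compatible with $h$, the Lefschetz $(1,1)$-theorem gives
\[
\Pic(X)=\sigma^\perp\cap H^2(X,\Z).
\]
Moreover, $\Pic(X)$ contains $h$ and has rank $\rho(X)\ge 3$.

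First, we land in some $\cZ_i$. By Proposition~\ref{prop:finlattices} and the discussion following it, the embedding $\Pic(X)\subseteq H^2(X,\Z)$ agrees with one of the fixed embeddings $L_1,\dots,L_n$ up to the action of $\Mon^2_h(X)$. Concretely, there is an index $i$ and an element $g\in \Mon^2_h(X)$ with $g(\Pic(X))=L_i$; we take the $L_i$ to contain $h$, or else work with their orthogonal complements inside $h^\perp$ as in Definition~\ref{defn:Z}. Then $g(\sigma)$ is orthogonal to $L_i$, so the period point lies in $\cZ_i$.

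Second, we show the point is very general in $\cZ_i$. Let $\widetilde{\cZ}_i=\P(L_i^\perp\otimes\C)\cap\cD_{h^\perp}$. If the lift $x=g(\sigma)\in\widetilde{\cZ}_i$ has Picard lattice $x^\perp\cap H^2(X,\Z)$ equal to $L_i$, then $x$ avoids the countably many proper sub-loci $\P(M^\perp\otimes\C)\cap\cD_{h^\perp}$. Here $M$ ranges over the primitive sublattices strictly containing $L_i$ (equivalently, over the extra classes $v\notin L_i$ with $x\cdot v=0$). Each such sub-locus is a proper closed analytic subset of $\widetilde{\cZ}_i$. Their images in $\cP_{h^\perp}$ are countably many proper closed subvarieties of $\cZ_i$, since $\Mon^2_h(X)$ is countable. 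This is exactly the meaning of "very general" here: the point lies outside a countable union of proper closed subsets. We need $L_i$ primitive; this holds because a Picard lattice is always saturated in $H^2(X,\Z)$.

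Third, uniqueness. Suppose the period point is a very general point of both $\cZ_i$ and $\cZ_j$. Then some lift is orthogonal to $g(\Pic(X))=L_i$ and to $g'(\Pic(X))=L_j$, and has Picard lattice exactly $L_i$ (respectively $L_j$) in the corresponding marking. So $L_i$ and $L_j$ are $\Mon^2_h(X)$-translates of each other. Because the $L_k$ were chosen as representatives of distinct orbits, $i=j$. Equivalently, $\cZ_i$ is recovered from the point as the image of $\P(\Pic(X)^\perp\otimes\C)\cap\cD_{h^\perp}$, which depends only on the $\Mon^2_h(X)$-orbit of $\Pic(X)$.

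The main obstacle is bookkeeping the choice of representatives so that "very general point of $\cZ_i$" is well defined independently of lifts. One point needing care: a point could lie on the image of $\cZ_j$ via some lift while its actual Picard lattice strictly contains a translate of $L_j$. Uniqueness must therefore be phrased via very generality, and not mere membership. I would first check that the countable union of sub-loci is $\Mon^2_h(X)$-invariant, so the notion descends to $\cP_{h^\perp}$.
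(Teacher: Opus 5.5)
Your proposal is correct and follows the same route as the paper, which gives no separate proof and simply invokes the argument of \cite[Theorem~1.1(1)]{ASV26}. As in that argument, you use the finite list of $\Mon^2_h(X)$-orbits $L_1,\dots,L_n$ from Proposition~\ref{prop:finlattices} to place the period point in some $\cZ_i$. You read ``very general'' as the Picard lattice being exactly a translate of $L_i$, i.e.\ avoiding the sub-loci $\P(M^\perp\otimes\C)\cap\cD_{h^\perp}$ with $M\supsetneq L_i$; this is exactly how the proposition is used in the proof of Theorem~\ref{thm:main}. Uniqueness then follows because the $L_i$ represent distinct orbits.

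The descent issue you flag at the end is harmless. If $\gamma(x)\in\P(M^\perp\otimes\C)$ for some $\gamma\in\Mon^2_h(X)$, then $M\subseteq\gamma(L_i)$, which is impossible since $\rank M>\rank L_i$.
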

In the proposition above, $\wp$ denotes the period map from the moduli space of $h$-polarized hyper-K\"ahler manifolds deformation equivalent to $X$ to the period domain.

\subsection{The divisors $\cY_j$}

This section covers \cite[Section~2.3]{ASV26}, and we give details for completeness.

\begin{prop}[{\cite[Proposition~2.6]{ASV26}}]\label{prop:ReticoliMBM}
    Let $(\Lambda, q)$ be the BBF lattice of a hyper-Kähler manifold $X$ with $b_2(X)\ge 6$ and let $h\in \Lambda$ be a primitive vector of positive square. 
    There exists a family of primitive vectors $\{v_i\}$ in $h^\perp$ such that 
    \begin{itemize}
        \item $\lim_{i\to \infty} q(v_i) = -\infty$; 
        \item the saturation $M_i$ of the sublattice $\langle h, v_i\rangle \subset \Lambda$ is a primitive rank-2 anisotropic lattice that contains no MBM classes.
    \end{itemize}
\end{prop}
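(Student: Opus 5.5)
We need to find primitive vectors $v_i\in h^\perp$ with $q(v_i)\to-\infty$ such that $M_i=\mathrm{Sat}\langle h,v_i\rangle$ is anisotropic and contains no MBM classes. The plan is to exploit the known shape of the set of MBM classes: by the work of Amerik--Verbitsky (boundedness of MBM classes), there is a constant $N>0$, depending only on the deformation type, such that every MBM class $\delta\in\Lambda$ satisfies $-N\le q(\delta)<0$. Consequently, to exclude MBM classes from $M_i$ it suffices to arrange that $M_i$ represents no negative integer of absolute value at most $N$. Since $b_2\ge 6$, the lattice $h^\perp$ has signature $(2,b_2-3)$ with $b_2-3\ge 3$. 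Hence it contains a copy of $U$ (or at least a finite-index sublattice of $U\oplus U$-type pieces), by Meyer/Eichler-type arguments for indefinite lattices of rank $\ge5$.

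The construction proceeds in four steps.

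\emph{Step 1.} Fix $e,f\in h^\perp$ spanning a hyperbolic plane with $e^2=f^2=0$ and $e\cdot f=1$; if only a finite-index copy is available, replace it by a rescaled copy, which is harmless for what follows.

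\emph{Step 2.} Set $v=e-mf$ for a large integer $m$, or more flexibly $v=ae+bf$ with $q(v)=2ab=-2k$. This is primitive when $\gcd(a,b)=1$. We have $q(h)=2d$ fixed, and $\langle h,v\rangle$ has Gram matrix $\mathrm{diag}(2d,-2k)$.

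\emph{Step 3.} Anisotropy. $M=\langle h,v\rangle$ is anisotropic iff $dk$ is not a perfect square. We will choose $k$ with $k\to\infty$ and $dk$ not a square, e.g.\ $k$ prime not dividing $d$, or with a prime factor to an odd power.

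\emph{Step 4.} No short negative vectors. Elements of the saturation $M_i$ are of the form $x=(\alpha h+\beta v)/c$, where $c$ divides the index $[M_i:\langle h,v\rangle]$. This index divides $\gcd(2d,\mathrm{div}(v))$-type quantities and is therefore bounded in terms of $d$ only, so $q(x)=(2d\alpha^2-2k\beta^2)/c^2$. If $\beta=0$ then $q(x)>0$. If $\beta\neq0$ and $q(x)<0$ with $|q(x)|\le N$, then $|2d\alpha^2-2k\beta^2|\le Nc^2$ is bounded. So we need $k$ such that the binary form $d\alpha^2-k\beta^2$ represents no nonzero integer of absolute value $\le N'$ besides those forced, and we must exclude small negative values.

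Step 4 is the main obstacle. The first thing I would try is a congruence obstruction: choose $k$ divisible by a large auxiliary structure, say $k=p\cdot\ell$ with primes $p$ chosen so that $d$ is a quadratic non-residue mod $p$ and $p>N'$. If $d\alpha^2-k\beta^2=n$ with $0<|n|\le N'$ and $p\mid k$, then $d\alpha^2\equiv n\pmod p$. Taking instead finitely many primes $p_1,\dots,p_r$ and requiring, via CRT and Dirichlet, that each target $n$ with $|n|\le N'$ fails to be $d$ times a square modulo some $p_j\mid k$, rules out all small values. The representations where $p\mid\alpha$ need separate handling by descent, which is the delicate point. Letting $k\to\infty$ along such a progression then gives $q(v_i)\to-\infty$, completing the construction.
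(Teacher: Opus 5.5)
Your plan is correct and follows the same skeleton as the paper's proof:
\begin{itemize}
\item Meyer's theorem gives an isotropic $e\in h^\perp$;
\item you take $v$ in a (possibly rescaled) hyperbolic plane, so that $\langle h,v\rangle$ is diagonal with $-q(v)$ as large as desired;
\item the index of $\langle h,v\rangle$ in its saturation is bounded by $T=[\Lambda:\Z h\oplus h^\perp]$;
\item the bound $N$ on squares of MBM classes reduces everything to a binary diagonal form not representing $0,-1,\dots,-N'$.
\end{itemize}

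Where you differ is the arithmetic input. The paper chooses $q(u_a)=-8da$ and quotes \cite[Proposition~2.3]{ASV26} for the forms $\mathrm{diag}(d,-8ad)$. You instead prove the needed statement directly by quadratic-residue obstructions, which makes the argument self-contained.

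Your residue argument works, and the case $p_j\mid\alpha$ that you flag as delicate is vacuous. If $p_j\mid k$ and $p_j\mid\alpha$, then $p_j\mid n$, which is impossible for $0<|n|\le N'<p_j$. Concretely:
\begin{itemize}
\item for each $n\in\{-1,\dots,-N'\}$ pick a prime $p_n>N'$ with $p_n\nmid 2d$ and $\left(\frac{nd}{p_n}\right)=-1$; this is possible since $nd<0$ is not a square;
\item put $k=\ell\prod_n p_n$, with $\ell$ a varying new prime;
\item then $d\alpha^2-k\beta^2=n$ forces $nd$ to be a square mod $p_n$, a contradiction;
\item $dk$ is never a square, so anisotropy also comes for free, and $k\to\infty$ as $\ell$ varies.
\end{itemize}

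One small repair is needed. $h^\perp$ need not contain $U$ itself, and in the rescaled case $v=ae+bf$ with $\gcd(a,b)=1$ need not be primitive in $h^\perp$. Replace $v$ by the primitive generator of $\Q v\cap h^\perp$, exactly as the paper does with $\Z u_a\cap h^\perp$. (The paper avoids the rescaling altogether by working in the unimodular plane $\langle e/m,f\rangle$ of the overlattice $h^\perp+\Z\frac{e}{m}$.) The divisibility of $v$ in $h^\perp$ is bounded by the index of the rescaled plane in its saturation, independently of $k$. Hence $q(v_i)\to-\infty$ still holds, and your Step 4 goes through with a larger $N'$.
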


\begin{proof}
    We denote by $h^\perp$ the sublattice of $\Lambda$ orthogonal to $h$.
    By Meyer's theorem (see for example \cite[Corollary~2 at p.\ 43]{Serre:Arithmetique}), since the lattice $h^\perp$ is indefinite and has rank at least 5, there exists an isotropic vector $e\in h^\perp$. Let $f\in h^\perp$ be a primitive vector such that the product $m\coloneqq q(e, f)$ is a positive generator of the ideal $q(e, h^\perp)\subset \Z$. Consider the integral lattice $L\subset h^\perp\otimes \Q$ spanned by $h^\perp$ and $\frac{e}m$, and the sublattice $S$ generated by $\frac{e}m$ and $f$. The lattice $S$ is unimodular, hence primitive in $L$. The vector
    $$
        u_a\coloneqq \left(1+2daf^2\right)\frac{e}m -4da f \in S
    $$
    is primitive of square $-8da$, and the lattice $\Z h\oplus \Z u_a$ is given by the quadratic form $\begin{pmatrix}
        d & 0 \\
        0 & -8ad
    \end{pmatrix}$. This lattice is denoted by $L_{8a}(d)$ in \cite{ASV26}. 

    Let $N$ be the bound for the square of MBM classes for the hyper-Kähler manifolds which are deformation equivalent to $X$, and let $T$ be the index of the sublattice $\Z h\oplus h^\perp$ in $\Lambda$. \cite[Proposition 2.3]{ASV26} applied with $n=NT^2$ and $b=8$, gives an increasing sequence of positive integers $\{a_i\}$ such that the lattice $\Z h\oplus \Z u_{a_i}$ does not represent $0, -1, -2, \dots, -NT^2$. 

    Let $v_i$ be a primitive generator of $\Z u_{a_i}\cap h^\perp$. Since the index of $h^\perp$ in $L$ is $m$, $q(v_i)\le  q(u_{a_i})$, hence the first condition is satisfied. Therefore, we only need to verify that the saturation $M_i$ of the sublattice $\Z h\oplus \Z v_{i}\subset \Lambda$ is anisotropic and contains no MBM class. Note that since $h$ and $v_i$ are primitive vectors, the index of $\Z h\oplus \Z v_{i}$ in $M_i$ is at most $T$. Hence, the lattice $M_i$ does not contain elements of square $0, -1, \dots, -N$, which concludes the proof.
\end{proof}

\begin{defn}[{\cite[Definition~2.7]{ASV26}}]\label{def:Divisors}
    We denote by $\cY_i$ the image of $\P(M_i^\perp\otimes \C)\cap \cD_{h^\perp}$ in the period domain $\cP_{h^\perp}$.
\end{defn}

By \cite[Section~2.4]{ASV26}, the collection $\{\mathcal{Y}_i\}_i$ is strongly dense, i.e.\ for any non-constant holomorphic morphism $f\colon\Delta\to\cP_{h^\perp}$, the subset $f^{-1}(\bigcup_i \cY_i)$ is dense in $\Delta$. Here $\Delta$ is the complex unit disk.
\begin{rem}
   A general point of $\cY_j$ corresponds to a hyper-K\"ahler manifold with an automorphism of infinite order (cf.\ \cite[proof of Theorem~1.1.3]{ASV26}). Since we are interested in birational self-maps, it would have been enough to consider MBM classes corresponding to stably prime exceptional divisors only; nevertheless, this would not simplify the proof, so we keep the construction as in \cite{ASV26}.
\end{rem}

\section{Proof of Theorem~\ref{thm:main}}
Let $\pi\colon\mathscr{X}\to \Delta$ be a non-isotrivial family of projective \hk\ manifolds over a small disk, with marking $\tau \colon R^2\varphi_*\Z_\mathscr{X}\to \Lambda\times \Delta$.

Set $\Lambda_t \coloneqq \tau_t(N^1(\mathscr{X}_t))$. By \cite[Section~2]{Oguiso03}, there exists a primitive lattice $\Lambda_0\subset \Lambda$ and an uncountable dense subset $\cG\subset \Delta$ such that:
\begin{enumerate}
    \item $\cS\coloneqq \Delta\setminus \cG$ is countable and dense;
    \item $\Lambda_0\subset \Lambda_t$ for all $t\in \Delta$;
    \item $\Lambda_0 = \Lambda_t$ for all $t\in \cG$;
    \item $\Lambda_0\subsetneq \Lambda_t$ for all $t\in \cS$.
\end{enumerate}
We will say that the points in $\cG$ are \emph{very general}, while those in $\cS$ are \emph{special}. Note that the Picard number $\rho(\mathscr{X}_t)$ determines whether the point $t$ is very general or special.

The proof of Theorem~\ref{thm:main} is the same as the proof of \cite[Corollary~1.2]{ASV26}. We re-propose it here for completeness. Recall from Definitions~\ref{defn:Z} and \ref{def:Divisors} the subvarieties $\mathcal{Z}_j$ and the divisors $\cY_i$, respectively.

\begin{proof:mainthm1}
Assume that one very general fiber of $\pi\colon \cX \to \Delta$ carries a birational automorphism of positive algebraic entropy. By \cite[Theorem~1.1]{DORV26}, up to shrinking $\Delta$, there exists a subgroup $G^{0}<\Mon^2_{\Bir}(\cX_t)$ of finite index for every $t$ that is very general. It follows that $G^0$ carries a birational automorphism of positive algebraic entropy and thus $\Mon^2_{\Bir}(\cX_t)$ carries a birational automorphism of positive algebraic entropy for every very general $t$. 

Hence we may assume that $\Bir(\cX_t)$ has zero algebraic entropy for all very general $t$. Consider the classifying morphism $\psi \colon \Delta\rightarrow \cP_{h^\perp}$. Let $L$ be the lattice that represents the isomorphism class of $\Pic(\cX_t)$ for a very general $t\in \Delta$. We divide the proof into three cases.

First, assume that $\rank(L)\ge 3$. Since $\Bir(\cX_t)$ has zero algebraic entropy for $t$ very general, by Proposition~\ref{prop:DivisorsZi} there exists a unique index $i$ such that $\psi(t)$ is a very general point of $\cZ_i$ for all very general $t$. Therefore, $\psi(\Delta)$ is contained in $\cZ_i$, and is not contained in any $\cZ_j$ of dimension smaller than $\dim(\cZ_j)$. 

We set 
$$
J \coloneqq \{j \le n\mid \dim(\cZ_j)<\dim(\cZ_i)\}.
$$ 
Consider the set $\cS$ of points $t$ where $\rho(\cX_t)>\rank(L)$. Note that $\psi(\cS)$ is not contained in $\bigcup_{j\in J} \cZ_j$ since $\cS$ is dense in $\Delta$ by \cite{Oguiso03} and the subvarieties $\cZ_j$ are irreducible. Since $J$ is finite, the set
$$
\Delta' = \Big\{t\in \cS \mid \psi(t)\not\in \bigcup_{j\in J} \cZ_j\Big\}
$$
is dense in $\Delta$. Moreover, for every $t\in \Delta'$, the fiber $\cX_t$ contains a birational automorphism of positive algebraic entropy, since otherwise Proposition~\ref{prop:DivisorsZi} would yield a contradiction. This concludes the proof in the case where $\rank(L)\ge 3$.

Consider now the case $\rank(L)= 2$, and consider again the dense subset $\cS$ of points $t$ such that $\rho(\cX_t) > 2$. As above, $\psi(\cS)$ cannot be contained in $\cup_{i} \cZ_i$, and we conclude by defining $\Delta' := \{t\in \cS\mid \psi(t)\notin \cup_{j} \cZ_j\}$.

Finally, consider the case $\rank(L)= 1$, and the divisors $\cY_j$ constructed in Definition~\ref{def:Divisors}. The strong density of the collection $\{\cY_i\}$ (see \cite[Theorem~1.1]{ASV26}) implies that the preimage $\Delta'' = \psi^{-1}(\cup_j \cY_j)$ is dense in $\Delta$. Note that by construction of $\cY_i$, for every $t\in \Delta''$ such that $\rho(\cX_t)$ equals two, $\Bir(\cX_t)$ has positive algebraic entropy. In fact, since it contains no MBM classes by construction, it is infinite, and by
Theorem~\ref{thm:Oguiso07}, since it does not contain isotropic classes, it must have positive entropy. Then we conclude as above by defining $\Delta' := \{t\in \Delta''\mid \psi(t)\notin \bigcup_{i} \cZ_i\}$.
\end{proof:mainthm1}

\begin{proof:corollary}
Suppose that the locus $\mathcal{D}$ of points $s\in B$ such that $\cX_s$ has birational automorphisms of positive algebraic entropy is not dense. After taking the normalization $B^{\nu}\to B$ and then a resolution $\widetilde{B}\to B^{\nu}$, we may assume that the base $B$ of the family is smooth. Since $\mathcal{D}$ is not dense, we may pick its closure $\overline{\mathcal{D}}\subsetneq B$ and work with the restriction $\pi_{B'} \colon \cX':=\cX\times_B B'\to B'$, where $B':= B \setminus \overline{\mathcal{D}}$. Note that the restriction of $\pi_{B'}$ to any connected component of $B'$ stays non-isotrivial. Indeed, let $B''$ be a connected component of $B'$; since $B''$ is open in $B'$, it is also open in $B$. There is a morphism of locally free sheaves $f\colon T_{B}\to R^1\pi_\ast T_{\cX/B}$ induced by $\pi$, and if $\pi_{B''}$ were isotrivial, then the image of $f$ would be a torsion subsheaf of $R^1\pi_\ast T_{\cX/B}$. Since the latter is locally free, the morphism $f$ must be trivial, which contradicts the non-isotriviality of $\pi$.

By working on a connected component of $B'$, we can now suppose that $B'$ is connected and $\pi_{B'}\colon \cX'\to B'$ is non-isotrivial. Then there exists a point $0\in B'$ at which the Kodaira--Spencer map $T_{B',0}\to H^1(\cX_0,T_{\cX_0})$ is nonzero. Since $B'$ is a complex manifold, we can find an arbitrarily small disk $\Delta_{\epsilon}$ passing through $0$ such that the family $\pi_{\Delta_{\epsilon}}\colon \cX'\times_{B'}\Delta_{\epsilon}\to \Delta_{\epsilon}$ is projective and non-isotrivial. By construction, the birational automorphisms of the fibers all have algebraic entropy zero, and this contradicts Theorem \ref{thm:main}.
\end{proof:corollary}

\begin{rem}\label{rmk:rho 2}
    When $\rho(X)=2$, the group $\Bir(X)$ is finite if and only if it has zero algebraic entropy. Indeed, if we assume that $\Bir(X)$ is infinite and of zero algebraic entropy, then the lattice $\Pic(X)$ contains an isotropic vector by Theorem~\ref{thm:Oguiso07}.\ref{oguiso-part1}. The result then follows from \cite[Theorem~1.3]{Oguiso14}.
\end{rem}


\bibliography{biblio}

@article {Oguiso14,
    AUTHOR = {Oguiso, Keiji},
     TITLE = {Automorphism groups of {C}alabi-{Y}au manifolds of {P}icard
              number 2},
   JOURNAL = {J. Algebraic Geom.},
  FJOURNAL = {Journal of Algebraic Geometry},
    VOLUME = {23},
      YEAR = {2014},
    NUMBER = {4},
     PAGES = {775--795}
}

@article {Huyb18,
    AUTHOR = {Huybrechts, Daniel},
     TITLE = {Finiteness of polarized {K}3 surfaces and hyperk\"ahler
              manifolds},
   JOURNAL = {Ann. H. Lebesgue},
  FJOURNAL = {Annales Henri Lebesgue},
    VOLUME = {1},
      YEAR = {2018},
     PAGES = {227--246}
}

@article {Oguiso03,
    AUTHOR = {Oguiso, Keiji},
     TITLE = {Local families of {$K3$} surfaces and applications},
   JOURNAL = {J. Algebraic Geom.},
  FJOURNAL = {Journal of Algebraic Geometry},
    VOLUME = {12},
      YEAR = {2003},
    NUMBER = {3},
     PAGES = {405--433}
}

@book{GHJ03,
    author  = "Mark Gross and Daniel Huybrechts and Dominic Joyce",
    title   = "Calabi-Yau Manifolds and Related Geometries. Lectures at a Summer School in Nordfjordeid, Norway, June 2001",
    year    = "2003",
    publisher  = "Springer-Verlag"
}

@article{Nikulin96,
    author = {Viacheslav Valentinovich Nikulin},
    title = {Reflection groups in {L}obachevskii spaces and the denominator identity for {L}orentzian {K}ac--{M}oody algebras},
    journal = {Izv. Math.},
    year = {1996},
    volume = {60},
    issue = {2},
    pages = {305--334}
}

@article {Borel,
    AUTHOR = {Borel, Armand},
     TITLE = {Density properties for certain subgroups of semi-simple groups
              without compact components},
   JOURNAL = {Ann. of Math. (2)},
  FJOURNAL = {Annals of Mathematics. Second Series},
    VOLUME = {72},
      YEAR = {1960},
     PAGES = {179--188}
}

@article{BHC,
    AUTHOR = {Borel, Armand and Harish-Chandra},
     TITLE = {Arithmetic subgroups of algebraic groups},
   JOURNAL = {Ann. of Math. (2)},
  FJOURNAL = {Annals of Mathematics. Second Series},
    VOLUME = {75},
      YEAR = {1962},
     PAGES = {485--535}
}

@article{DinhSibony05,
    author ={Dinh, Tien-Cuong and Sibony, Nessim},
    title = {Une borne supérieure pour l’entropie
topologique d’une application rationnelle},
    journal = {Ann. of Math. (2)},
    year = {2005},
    volume = {161},
    pages = { 1637--1644}
}

@article{LoBianco,
    author ={Lo Bianco, Federico},
    title ={On the Primitivity of birational Transformations of Irreducible Holomorphic Symplectic Manifolds},
    journal ={Inter. Math. Res. Not. IMRN},
    volume={2019},
    number={1},
    pages={1--32},
    year ={2017}
}

@article{Markman:Galois,
    author = {Markman, Eyal},
    title = {Modular {G}alois covers associated to symplectic resolutions of singularities},
    journal = {J. Reine Angew. Math},
    year = {2010},
    volume = {644},
    pages = {189--220}
}

@incollection {Markman11,
    AUTHOR = {Markman, Eyal},
     TITLE = {A survey of {T}orelli and monodromy results for
              holomorphic-symplectic varieties},
 BOOKTITLE = {Complex and differential geometry},
    SERIES = {Springer Proc. Math.},
    VOLUME = {8},
     PAGES = {257--322},
 PUBLISHER = {Springer, Heidelberg},
      YEAR = {2011}
}

@article{Oguiso06,
    author = {Oguiso, Keiji},
    title = {Tits alternative in hyperk{\"a}hler manifolds},
    journal = {Math. Res. Lett.},
    year = {2006},
    volume = {13},
    number = {2},
    pages ={307--316}
}

@article{Oguiso07,
    author = {Oguiso, Keiji},
    title = {Automorphisms of hyperk\"{a}hler manifolds in the view of topological entropy},
    journal = {Contemporary Mathematics},
    year = {2007},
    volume = {422},
    pages = {173--185}
}

@article{Oguiso08,
    author ={Oguiso, Keiji},
    title ={Bimeromorphic automorphism groups of non-projective hyperkähler manifolds -- a note inspired by {C.T. M}c{M}ullen},
    journal ={J. Differential Geom.},
    year = {2008},
    volume = {78},
    number = {1},
    pages = {163--191},
    doi = {10.4310/jdg/1197320605}
}

@article{Oguiso09,
    author = {Oguiso, Keiji},
    title = {A remark on dynamical degrees of automorphisms of hyperkähler manifolds},
    journal = {Manuscripta Math.},
    year = {2009},
    volume={130},
    pages={101--111}
}

@misc{ASV26,
 author = {Amerik, Ekaterina and Soldatenkov, Andrey and Verbitsky, Misha},
 title = {Reflective lattices and hyperk\"{a}hler manifolds},
 year = {2026},
 howpublished = {Preprint, {arXiv}:2605.28593 [math.{AG}]},
 url = {https://arxiv.org/abs/2605.28593},
 arXiv = {arXiv:2605.28593}
}

@article{Huybrechts:Basic,
    author ={Huybrechts, Daniel},
    title ={Compact hyperkähler manifolds: basic results},
    journal ={Invent. Math.},
    year = {1999},
    volume={135},
    pages={63–-113}
}

@misc{DORV26,
      title={Birational automorphism groups in families of hyper-{K}\"ahler manifolds}, 
      author={Francesco Antonio Denisi and Claudio Onorati and Francesca Rizzo and Sasha Viktorova},
      year={2026},
      howpublished = {Preprint, {arXiv}:2601.16090 [math.{AG}]},
      url={https://arxiv.org/abs/2601.16090},
      arXiv = {arXiv:2601.16090}
}

@book{Serre:Arithmetique,
    author = {Serre, Jean--Pierre},
    title = {A course in arithmetic},
    publisher = {Graduate Texts in Mathematics, Springer-Verlag},
    volume = {7},
    year = {1973}
}

@article{XunYu,
    author = {Yu, Xun},
    title = {{K3} surface entropy and automorphism groups},
    journal = {J. Algebraic Geom.},
    year = {2025},
    volume = {34},
    pages = {205--231}
}
\bibliographystyle{alpha}
\end{document}